\newcommand{\algorithmicbreak}{\textbf{break}}
\newcommand{\BREAK}{\STATE \algorithmicbreak}
\newcommand{\algorithmiccontinue}{\textbf{continue}}
\newcommand{\CONTINUE}{\STATE \algorithmiccontinue}
\newtheorem{theorem}{Theorem}[section]
\newtheorem{remark}{Remark}[section]
\def\ps@pprintTitle{%
   \let\@oddhead\@empty
   \let\@evenhead\@empty
   \let\@oddfoot\@empty
   \let\@evenfoot\@oddfoot
}
\begin{document}
	\begin{frontmatter}	
			\title{  Rectangularly Dualizable Graphs: Area-Universality }

	\author{Vinod Kumar\corref{cor1}\fnref{label2}}
	\ead{vinodchahar04@gmail.com}
	\author{	Krishnendra Shekhawat\fnref{label2}}
	\ead{krishnendra.shekhawat@pilani.bits-pilani.ac.in}
	\fntext[label2]{Department of Mathematics, BITS Pilani, Pilani Campus, Rajasthan-333031, India }
	\cortext[cor1]{Corresponding Author}


\begin{abstract}
A plane graph is called a {\it rectangular graph} if  each of its edges  can be oriented  either horizontally or vertically, each of its interior regions is  a four-sided region and all interior regions  can be fitted in a rectangular enclosure.  If the dual of a plane  graph   is a   {\it rectangular  graph}, then  the plane graph   is a  {\it rectangularly dualizable graph}.  A rectangular dual  is {\it area-universal} if any assignment of areas to each of its regions can be realized by a combinatorially weak equivalent  rectangular dual.  It is still unknown   that there exists no  polynomial time  algorithm to construct an area-universal rectangular dual for  a {\it rectangularly dualizable graph }. In this paper, we describe a class  of  {\it rectangularly dualizable graphs} wherein each graph can be realized by an area-universal rectangular dual. We also present  a  polynomial time algorithm for  its construction.
\end{abstract}

\begin{keyword} area-universality \sep cartogram  \sep rectangularly dualizable graph \sep rectangular dual \sep VLSI circuit.
\end{keyword}
\end{frontmatter}

\section{Introduction}
\label{sec1}
A plane graph is called a {\it rectangular graph} if  each of its edges  can be oriented  either horizontally or vertically, each of its interior regions is  a four-sided region and all interior regions  can be fitted in a rectangular enclosure. A graph $\mathcal{H}$ is  dual of a  plane graph $\mathcal{G}$ if vertices of $\mathcal{G}$  correspond to regions of $\mathcal{H}$  and whenever two  vertices are adjacent, the corresponding regions of  $\mathcal{H}$ are adjacent.  If the dual of a plane  graph   is a   {\it rectangular  graph}, then  the plane graph   is a  {\it rectangularly dualizable graph} (RDG). The only missing information in the dual of an RDG to be a rectangular dual is  to assign  horizontal or vertical orientation to  each of its edges.  The point where three or more rectangles of a given rectangular dual meet is called a joint.  A rectangular dual has 3-joints and 4-joints only where 4-joints are regarded as a limiting case of 3-joints \cite{rinsma1988existence}. Hence abiding by the common design practice, we consider rectangular duals with 3-joints in this paper. Thus, a rectangular dual of a plane graph can be seen as a partition of a rectangle into $n-$ rectangles provided that no four of them meet at a point. 
\par
A rectangular dual is {\it area-universal} \cite{Eppstein12} if any assignment of areas to each of its rectangles can be realized by a  {\it combinatorially weak equivalent} rectangular dual.  If there associate areas to each of its rectangles, then rectangular dual  is  known as  a {\it rectangular cartogram}.  More precisely, a rectangular cartogram  is a map in which  each region is a rectangle enclosing a rectangular area where the size of each rectangle  relies on the area assigned to it. Without knowing the concept of area-universality, it is difficult  to find a desired rectangular cartogram. 
 \par
The rectangular cartograms   \cite{raisz1934rectangular}  are used to visualize spatial information (it may be economic strength, population etc.) of geographic regions, i.e. they are used to display more than one quantity associated with the same set of geographic regions (in  \cite{raisz1934rectangular}, the population, land area and wealth within the United States were shown as cartograms). The visual comparison of multiple cartograms corresponding to the same set of geographic regions can be made easier if each of the cartograms is area-universal.  
\par
A VLSI system structure is described by a graph where vertices correspond to component modules and edges correspond to required interconnections. Area-universal rectangular duals are useful in controlling aspect ratios  \cite{Young01,Hong04,Murata98}. At an early stage, the areas of soft modules are not yet known, however the relative positions of modules of a VLSI circuit are known. Constructing an area-universal rectangular dual enables us to assign areas  to its rectangles (modules) at the later stage. Thus, the ability of constructing an area-universal rectangular dual at an early stage optimizes circuit's area at the later stage.

 \subsection{\textbf{Related work}} 
Only plane graph can be dualized. The class of RDGs  is very restrictive \cite{Kozminski85,Lai90}. Not every RDG can  be realized by an area-universal rectangular dual \cite{Rinsma87,Eppstein12}.  Rinsma \cite{Rinsma87} described a vertex-weighted outer planar $\mathcal{G}$ (area is assigned to each of its  vertex)  such that there exists no rectangular dual for $\mathcal{G}$ having these weights as  rectangles' areas. Thus it is interesting to know when a  rectangularly dualizable graph can be realized by  an area-universal rectangular dual. Recently, Eppstein et al. \cite{Eppstein12} derived the following necessary and sufficient condition for a rectangular dual to be area-universal. 
\begin{theorem}
{\rm \cite[Theorem 2]{Eppstein12} A rectangular dual $F$ is area-universal if and only if every maximal internal line segment of $F$ is a side of atleast one of its component rectangles.}  
\end{theorem}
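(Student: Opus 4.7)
The plan is to prove the biconditional by analyzing, for a fixed combinatorial type of rectangular dual $F$, the space of geometric realizations as a function of the $x$-coordinates of the maximal vertical internal segments and the $y$-coordinates of the maximal horizontal internal segments. The realization map sends these coordinates to the vector of rectangle areas, and area-universality amounts to surjectivity of this map onto the positive orthant cut out by the fixed total area.

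For the necessity direction, I would argue by contraposition. Suppose some maximal internal segment $s$ (say horizontal) is not a side of any rectangle, so $s$ meets rectangles $R_{1}^{+}, \dots, R_{k}^{+}$ from above and $R_{1}^{-}, \dots, R_{\ell}^{-}$ from below with both $k, \ell \ge 2$. The length of $s$ then equals both $\sum_{i} \mathrm{width}(R_{i}^{+})$ and $\sum_{j} \mathrm{width}(R_{j}^{-})$, producing a nontrivial linear relation among widths that holds in every realization of this combinatorial type. I would then exhibit an area assignment which, together with the heights forced by the remaining structural constraints, makes these widths violate the relation; this shows that no combinatorially weak equivalent realization of those prescribed areas exists, so $F$ is not area-universal.

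For sufficiency, I would proceed by induction on $n$, the number of rectangles. The structural heart is a lemma asserting that, under the hypothesis, one can always find a rectangle $R$ of $F$ whose full side coincides with a maximal internal segment $s$ and whose contraction along $s$ (merging the strip containing $R$ into the neighboring configuration) yields a smaller rectangular dual $F'$ that still satisfies the segment-side hypothesis. Given any area assignment for $F$, the coordinate of $s$ provides the freedom needed to set the dimension of $R$ perpendicular to $s$ so that $R$ absorbs exactly its prescribed area; the remaining assignment has consistent total area on $F'$, and induction produces the desired realization, which is then reassembled across $s$.

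The main obstacle is establishing the structural lemma underlying the inductive step. The hypothesis guarantees that every maximal segment is a side of some rectangle, but one must select an $R$ whose contraction is both combinatorially clean (does not create a new maximal segment failing the hypothesis) and whose single free coordinate independently controls its area without over-constraining the rest of $F$. This requires a delicate combinatorial analysis of how maximal segments interact at T-junctions, and verifying that the induced reduction on $F'$ preserves the hypothesis globally rather than only locally near $R$. I expect this to be the technical core of the argument; the linear-algebraic reformulation of the realization map and a careful count of degrees of freedom versus constraints (via Euler's formula applied to the planar arrangement of maximal segments) should provide the quantitative skeleton around which the combinatorial lemma is built.
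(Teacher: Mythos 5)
First, note that the paper does not prove this statement at all: it is quoted verbatim as Theorem~2 of Eppstein et al.\ \cite{Eppstein12}, so there is no in-paper proof to compare yours against; the comparison must be with the argument in that reference. Your high-level framing is sound and standard: a rectangular dual with $n$ rectangles and only 3-joints has exactly $n-1$ maximal internal segments, so the map from segment coordinates to the $(n-1)$-dimensional simplex of area vectors is a map between spaces of equal dimension, and area-universality is surjectivity of this map. Your necessity direction has the right idea but is not finished: the relation $\sum_i \mathrm{width}(R_i^{+}) = \sum_j \mathrm{width}(R_j^{-})$ holds in \emph{every} realization of \emph{every} layout containing such a segment, so by itself it proves nothing; the entire content lies in the unexhibited area assignment that is incompatible with it, and producing one requires propagating control over the heights through the rest of the layout, which you defer.

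The genuine gap is in sufficiency. The ``structural lemma'' you lean on --- that one can always find a rectangle $R$ whose side is a maximal segment $s$ such that contracting $R$ along $s$ yields a smaller one-sided rectangular layout, and that the coordinate of $s$ then independently tunes the area of $R$ --- is precisely the hard part, and there are concrete reasons to doubt it in the form stated. Collapsing $R$ generally does not leave a rectangular layout: the opposite side of $R$ lies on another maximal segment that may extend past $R$, and re-gluing the rectangles formerly separated by $R$ can create 4-joints or new segments violating the one-sidedness hypothesis. Moreover the area of $R$ depends on both of its dimensions, only one of which is controlled by $s$; the other is determined by segments already fixed in $F'$, so the ``one free coordinate per rectangle'' bookkeeping is circular rather than triangular. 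The proof in \cite{Eppstein12} avoids induction on rectangles altogether: it invokes the existence and uniqueness of a realization of any area assignment within a weaker, order-based equivalence class and then shows that one-sidedness forces that realization to remain combinatorially equivalent to $F$. Unless you can actually prove your contraction lemma, including global preservation of one-sidedness, the sufficiency half of your argument restates the theorem rather than proving it.
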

A line segment in a rectangular dual $F$ is formed by the consecutive inner edges of $F$. A maximal line segment is the one which is not contained in any other line segments. The rectangular dual shown in Fig. \ref{fig:f1}a is area-universal since every maximal internal line segment is the side of  its rectangles (for example, the bold line  segment $p$ in Fig. \ref{fig:f1}a is the side of a rectangle) whereas the rectangular dual shown in Fig. \ref{fig:f1}b is not area-universal since the  maximal   line segment $s$ (the bold line segment) is not the side of any of its rectangles. 
\begin{figure}[H]
	\centering
	\includegraphics[width=0.6\linewidth]{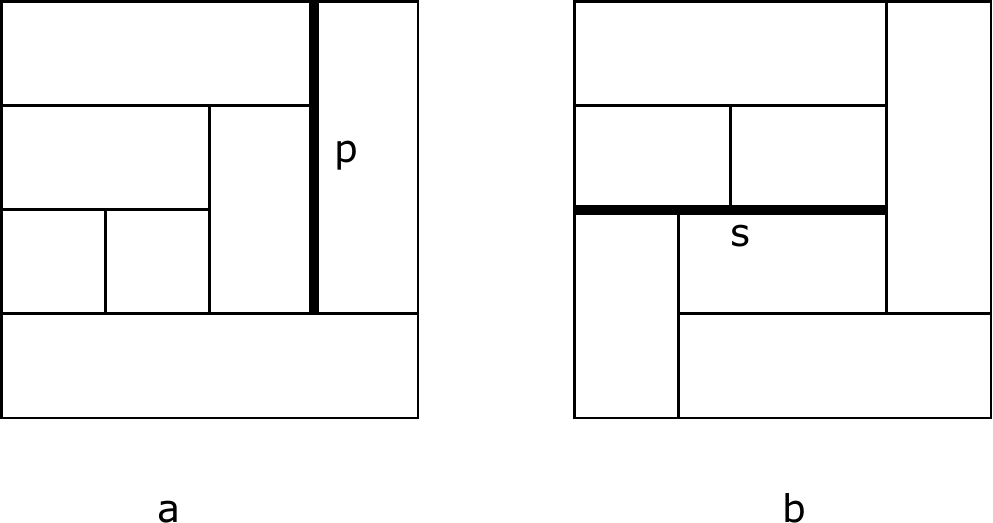}
	\caption{(a) An area-universal rectangular dual and (b) an rectangular dual that is not area-universal. }
	\label{fig:f1}
\end{figure}
Eppstein \cite{Eppstein12} et al. described an algorithm  that constructs an area-universal rectangular dual for an RDG $\mathcal{G}$, if it exists. The computational complexity of this algorithm is O$(2^{\text{O}(K^2)}n^{\text{O}(1)})$ where $K$ is the maximum number of 4 degree vertices in any minimal separation component. This algorithm is not fully polynomial. For instance, if $K$ is fixed, it runs in a polynomial time but in general, it runs in an exponential time. 
\par
The heuristic algorithms for constructing area-universal rectangular duals can be seen in \cite{wimer1988floorplans,Marc07}. The area-universal convex polygonal drawings for biconnected outer planar graphs are given in \cite{chang2017area}.  Recently, it has been shown that a planar graph $G$ is area-universal if for any area assignment  to the inner regions of  $G$, a straight line drawing of $G$ can be realized \cite{kleist2018drawing,11303_8540} and the area-universality  of  subgraphs of such graphs can be seen in \cite{biedl2013drawing}. The extension of the concept of area-universality to rectilinear duals has been given in \cite{Alam13,kawaguchi2007orthogonal}.
 \subsection{\textbf{Results}} 
 Any efficient algorithm to construct an area-universal rectangular dual for a given RDG, if it exists,  still unknown. In this paper,  we describe a class of RDGs in which each RDG can be realized by  an area-universal rectangular dual  in polynomial time.   We also show that every  induced rectangular dualizable subgraph of this class can also be realized by  an area-universal rectangular dual. This work can be seen in  Section \ref{sec2} and its conclusion  is given in Section \ref{sec3}.  
 \par
 Table \ref{tab1} depicts a list of notations used in this paper.
  
  \begin{table}[H] 
  	\centering
  	\begin{tabular}{|p{2cm}|p{8.5cm}|}
  		\hline
  		Symbol & Description \\	
  		
  		\hline
  		$\mathcal{G}$ &  rectangularly dualizable graph \\
  		\hline
  		
  		RDG(s) &   rectangularly dualizable graph(s)\\
  		\hline
  		$v_i$ & $i^{\text{th}}$ vertex of a graph\\	
  		\hline
  		$d(v_i)$ & degree of  $v_i$\\	
  		\hline
  		
  		$N(v_i)$ & neighborhood of $v_i$\\	
  		\hline
  			$|P|$ &  cardinality of $P$\\	
  		\hline
  		$R_i$ & $i^{th}$ component rectangle of a rectangular dual corresponding to $v_i$\\	
  		\hline
  	\end{tabular} 
  	
  	\caption{\rm List of Notations} 
  	\label{tab1} 
  	
  \end{table}


\section{Area-Universal Rectangular Dual Construction }\label{sec2}
In this section, we first identify a class $\mathcal{C}$ of  (RDGs) in which  every graph admits an area universal rectangular dual. Then we present four   algorithms.   Algorithm \ref{algo21} computes a set $P$ of 4 degree vertices forming a path in a given RDG $\mathcal{G}$ which is the input for Algorithm \ref{algo22} as well as Algorithm \ref{algo23}. Algorithm \ref{algo22} decides whether the given RDG belongs to  $\mathcal{C}$. Algorithm \ref{algo23} constructs  an area-universal rectangular dual for the given RDG that belongs to $\mathcal{C}$. Algorithm \ref{algo24} computes the position of rectangles to be inserted in the rectangular dual. It is used as a call function by Algorithm \ref{algo23}. Then  we analyze the computation complexity of these algorithms. The proof of correctness of  Algorithm \ref{algo23} follows from Theorem \ref{thm22} and the characterization of $\mathcal{C}$ follows from  Theorem \ref{thm23}. It is noted that Algorithm \ref{algo22}  works for biconnected graphs only which may not be seen as a restriction since a separable connected graph can be partitioned into  biconnected subgraphs.
\par
The class $\mathcal{C}$ of RDGs is defined as follows: Let $\mathcal{G}$ be an RDG. Let $L_1$ be a set of 4 degree vertices forming a path in $\mathcal{G}$ such that $|L_1|>1$. Let $L_i$ $(i\geq 2)$ be the sets of vertices of $\mathcal{G}$ such that $L_{i-1}$ $\subset L_i$ and has exactly one more vertex $v_s$ other than the vertices of  $L_{i-1}$ with the property $|N(v_s) - (N(v_s) \cap L_{i-1})| \leq 3.$

\begin{algorithm}[H]	
	\caption{ \textbf{getDegree$4$AdjacentVertices}} 
	\begin{algorithmic}[1]

		\REQUIRE  An RDG $\mathcal{G}=(V,E)$ 
		\ENSURE	 A set $P$ of 4-degree vertices forming a path
		\STATE  $P = \phi$
		\STATE 	checked = $\phi$
		\STATE chainStarted $=$ true
		\FORALL {$v \in V$ }
		\IF {$d(v)==4$}
		\WHILE {chainStarted $==$ true}
		\STATE $P=P\cup \{v\}$
		\STATE checked$=$checked $\cup \{v\}$
		\FOR {$u\in (N(v)-$ checked)}
		\STATE chainStarted $=$ true
		\IF { $d(u)==4$}
		\STATE $v=u$
		\BREAK
		\ENDIF
		\ENDFOR
		\STATE chainStarted $=$ false
		\ENDWHILE
		\IF {$|P|>1$}
		\RETURN $P$
		\ELSE
		\PRINT   $\mathcal{G}$ does not belongs to $\mathcal{C}$.
		\ENDIF
		\ENDIF  
		\ENDFOR	
	\end{algorithmic} \label{algo21}
	
\end{algorithm}

As the elements of $P$ forms a path, therefore they can be indexed by another set $P_1=\{v_1, v_2, \dots, v_k\}$ in such a way that $v_i$ is adjacent to $v_{i+1}$ where $v_1$ is the initial point and $v_k$ is the end point of the path. We use this indexed set $P_1$ as the input requirement of Algorithm \ref{algo23}.
\par

\begin{algorithm}[H] 	
	\caption{\textbf{Checking whether $\mathcal{G}$ belongs to $\mathcal{C}$}} 
	\begin{algorithmic}[1]
		\REQUIRE An RDG $\mathcal{G}=(V,E)$ and $P$ (computed by Algorithm \ref{algo21}),
		
		\ENSURE	Either $\mathcal{G}$ belongs to $\mathcal{C}$ or not
		\STATE $L =P$
		\FORALL	{$v_i \in V-L$}
		\IF    {$v_i$ is adjacent to a vertex of $L$ such that $|N(v_i) - (N(v_i) \cap L)| \leq 3$}
		\STATE	$L =L \cup \{v_i\}$ 
		\ELSE
		\CONTINUE
		\ENDIF
		\ENDFOR
		
		\IF {$L = =V$}
		\RETURN $\mathcal{G}$ belongs to $\mathcal{C}$
	    \ELSE
		\PRINT  $\mathcal{G}$ does not belong to $\mathcal{C}$
		\ENDIF
	\end{algorithmic} \label{algo22}
\end{algorithm}

\par  
For an example, consider an RDG  $\mathcal{G}_1$ shown in Fig. \ref{fig:f2}. For $\mathcal{G}_1$, we have 

$L_1=\{v_2,v_3\}$,

$L_2=\{v_1,v_2,v_3\}$ and  $|N(v_1) - (N(v_1) \cap L_1)|=2$,

$L_3=\{v_1,v_2,v_3, v_4\}$ and  $|N(v_4) - (N(v_4) \cap L_2)|=2$, 

$L_4=\{v_1,v_2,v_3, v_4,v_5\}$ and  $|N(v_5) - (N(v_5) \cap L_3)|=3$,

$L_5=\{v_1,v_2,v_3,v_4,v_5, v_6\}$ and  $|N(v_6) - (N(v_6) \cap L_4)|=2$,

$L_6=\{v_1,v_2,v_3,v_4,v_5, v_6,v_7\}$ and  $|N(v_7) - (N(v_7) \cap L_5)|=3$,

$L_7=\{v_1,v_2,v_3,v_4,v_5, v_6,v_7,v_8\}$ and  $|N(v_8) - (N(v_8) \cap L_6)|=1$,

$L_8=\{v_1,v_2,v_3,v_4,v_5, v_6,v_7,v_8,v_9\}$ and  $|N(v_9) - (N(v_9) \cap L_7)|=1$,

$L_9=\{v_1,v_2,v_3,v_4,v_5, v_6,v_7,v_8,v_9,v_{10}\}$ and  $|N(v_{10}) - (N(v_{10}) \cap L_8)|=0$.

This shows that $\mathcal{G}_1$ in Fig. \ref{fig:f2} belongs to $\mathcal{C}$.
Also, for $\mathcal{G}_1$, 
Algorithm \ref{algo21} computes the set $P=\{v_2,v_3\}$ of 4 degree vertices, which is the only set of 4 degree vertices.
But there can be more than one path  consisting of 4 degree vertices in $\mathcal{G}$. In such case,  using Algorithm \ref{algo22}, we first check whether $\mathcal{G}$   belongs to $\mathcal{C}$ for each such set. If we have more than one  such set for which $\mathcal{G}$ belongs to $\mathcal{C}$, we can derive  an area-universal rectangular dual for each such set. If  for every such set, $\mathcal{G}$ fails to belong to $\mathcal{C}$, then  it is inconclusive whether $\mathcal{G}$ admits an area-universal rectangular dual. Another case for inconclusiveness is that each set is either empty or singleton (line 20, Algorithm \ref{algo21}).

Let $F$ denote a rectangular dual and $R_i((x_i,y_i),l_i,h_i)$ denote the $i^\text{th}$ component rectangle corresponding to  $v_i$ with bottom left coordinate $(x_i,y_i)$, length $l_i$ and height $h_i$.
\par

In Algorithm \ref{algo23},  the input graph is same as that of Algorithm \ref{algo22}.

\begin{algorithm}[H] 
	\caption{ \textbf{Constructing an area-universal rectangular dual}} 
	\begin{algorithmic}[1]
		\REQUIRE   $\mathcal{G}=(V,E)$,  $P_1$,  $N(v_i)$,  $\forall v_i \in V$, $k$ and $l$ are  integers such that $k=|P_1|$ and $l=|N(v_t) \cap L|$, a fix point $(x_0, y_0)$ in the plane
		\ENSURE An area-universal rectangular dual of $\mathcal{G}$ 
		\STATE $L = \phi$	
		\STATE $F = \phi$
		\FORALL {$i$ from $i=1$ to $k$}
		\FORALL { $v_i \in P_1$}
		\STATE {$x_0=x_i, y_0=y_i$}
		\STATE 	InsertRectangle $R_i((x_i,y_i), 1,1)$ in $F$
		\STATE  $x_0 \leftarrow x_0+1$
		\STATE $L = L \cup \{v_i\}$
		\ENDFOR
		\ENDFOR
		\FORALL {$v_t \in (V-L)$} 
		\IF {$|N(v_t) - (N(v_t) \cap L)| \leq 3 $} 
		\IF {$N(v_t) \cap L = =\{v_k, 1\leq k\leq l \}$}
		\STATE InsertRectangleFunction$(F, v_t, L, N(v_t))$
		\STATE $L = L \cup \{v_t\}$
		\ENDIF
		\ENDIF
		\ENDFOR
		\RETURN  $F$
\end{algorithmic}	\label{algo23}
\end{algorithm}

\begin{figure}[H]
	\centering
	\includegraphics[width=.52\linewidth]{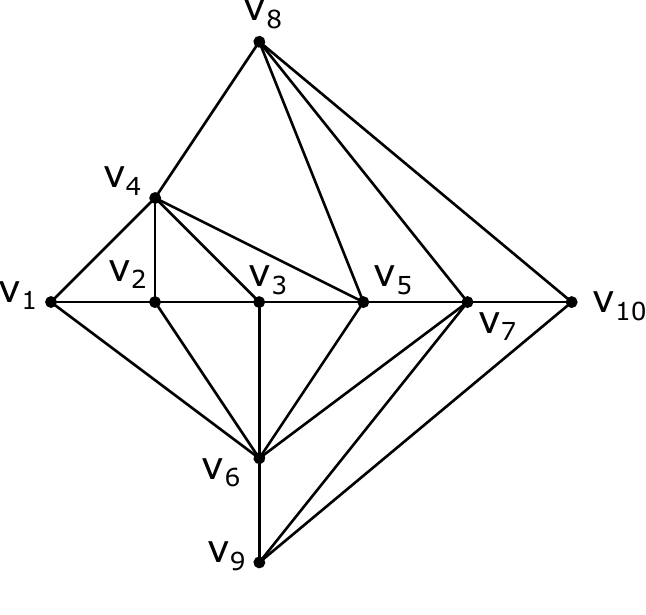}
	\caption{\rm An RDG $\mathcal{G}_1$ that belongs to $\mathcal{C}$.}
	\label{fig:f2}
\end{figure}

\begin{algorithm}[H]
	\caption{ \textbf{InsertRectangleFunction$(F, v_t, L, N(v_t))$}} 
	\begin{algorithmic}[1]
		\REQUIRE $F, v_t, L, N(v_t)$ 
		\ENSURE	 $F$ is updated with the addition of rectangle $R_t$ for some vertex $v_t$.
		\IF { $y_1==y_2==...==y_l $}
		\STATE $x_p=\text{min}\{x_k, 1\leq k \leq l\} $ 
		\STATE insertRectangle $R_t((x_p,y_p-1), \sum\limits_{i=1}^{l}l_i,1)$ in $F$ 
		 
		\ELSIF { $x_1==x_2==...==x_l $}
		\STATE $y_p=\text{min}\{y_k, 1\leq k \leq l\} $ 
		\STATE insertRectangle $R_t((x_p-1,y_p), 1,\sum\limits_{i=1}^{l}h_i)$ in $F$
		
		\ELSIF { $\text{there exists}~x_i \neq x_j, i \neq j, 1\leq i,j \leq l$}
		\STATE $y_p=\text{min}\{y_k, 1\leq k \leq l\} $ 
		\STATE insertRectangle $R_t((x_p+l_p,y_p), 1,\sum\limits_{i=1}^{l}h_i)$ in $F$
		
		\ELSE
		\STATE $x_p=\text{min}\{x_k, 1\leq k \leq l\} $ 
		\STATE insertRectangle $R_t((x_p,y_p+h_p), \sum\limits_{i=1}^{l}l_i,1)$ in $F$
		\ENDIF	
	\end{algorithmic} \label{algo24}
	
\end{algorithm}

Now we construct an area-universal rectangular dual for the graph $\mathcal{G}_1$ in Fig. \ref{fig:f2} using Algorithm \ref{algo23}.  Algorithm \ref{algo23} first updates $F$ by inserting  rectangles  $R_2$ and $R_3$  dual to the vertices of $P$ as shown in  Fig. \ref{fig:f3}a. Again, Algorithm \ref{algo23} recursively updates $F$ by inserting  rectangles   dual to the vertices of  $V-P=\{v_1,v_4,v_5,v_6, v_7,v_8,v_9,v_{10}\}$ such that $|N(v_i) - (N(v_i) \cap L)|\leq 3$ as shown in Fig. \ref{fig:f3}b-\ref{fig:f3}i.  Thus, Algorithm \ref{algo23} constructs an area-universal rectangular dual as shown in Fig. \ref{fig:f3}i  for $\mathcal{G}_1$.

\begin{figure}[H]
	\centering
	\includegraphics[width=.73\linewidth]{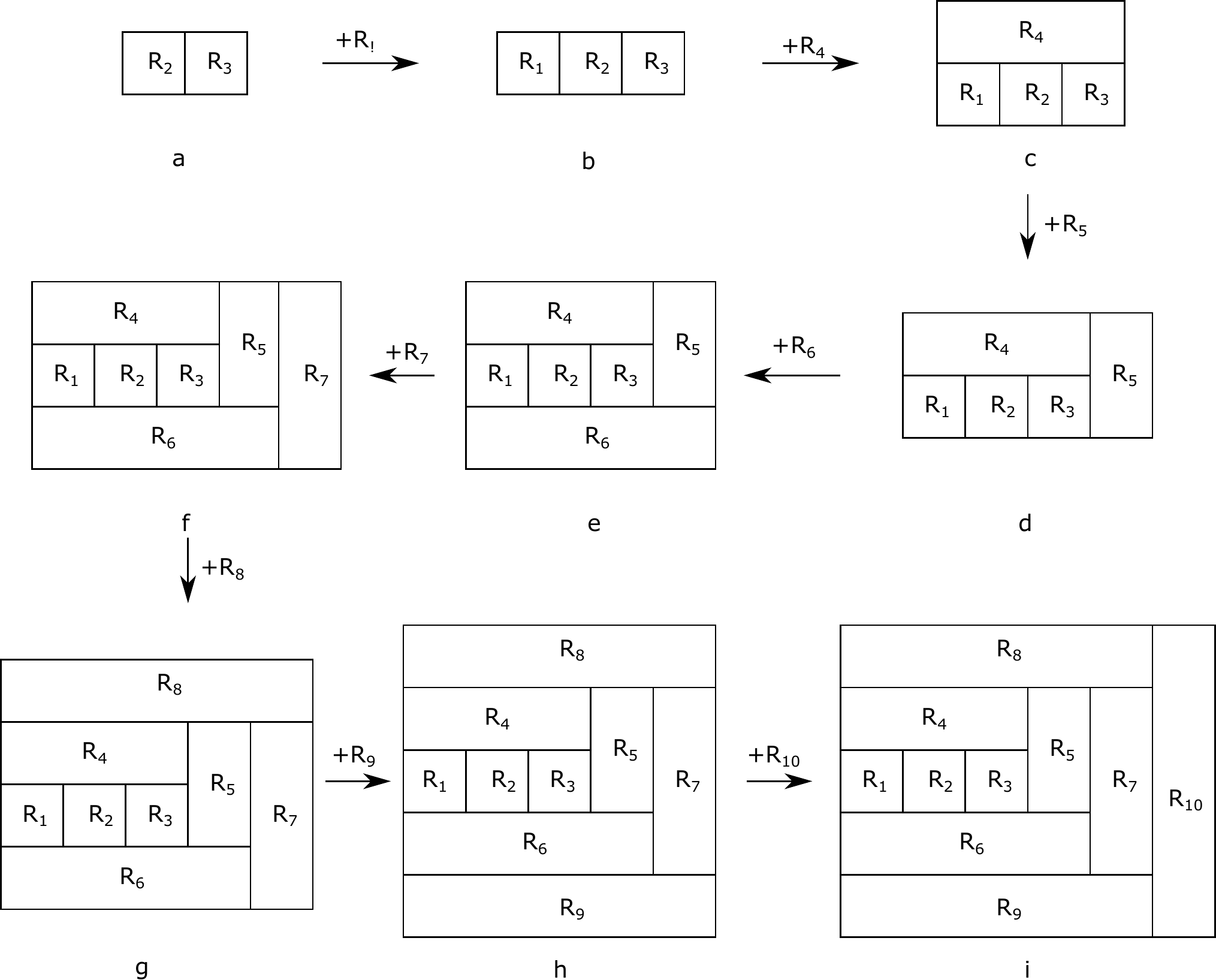}
	\caption{Constructing an area-universal rectangular dual for $\mathcal{G}_1$ in Fig. 2.}
	\label{fig:f3}
\end{figure}

\subsection*{\textbf{ Analysis of computational complexity}}

For Algorithm \ref{algo21}, to search $P$, the entire list of vertices requires  scanning in worst case. This implies that the computational complexity of Algorithm \ref{algo21} is $|P||N(v_s)| \cong$ O$(n)$ where $v_s$ is a vertex of the largest degree.
\par
For Algorithm \ref{algo22}, it is easy to note that  $|V-L|$ is large if and only if $|L|$ is small. This implies that both of them can not  approach to $|V|$ simultaneously. Therefore,  the computational complexity of Algorithm \ref{algo22} is  $|V-L||(N(v_i) \cap L)|$ = $|N(v_i)||L||V-L|\cong|N(v_s)||L||V|$ $\cong$ O$(n)$ where $v_s$ is the vertex of the largest degree. Hence, the computational complexity of Algorithm \ref{algo22} is linear. 
\par
The computational complexity of  lines  $3-9$ and lines $10-17$  in Algorithm \ref{algo23} are $k|P_1|$  and $|V-L|.|(N(v_i) \cap L)|$ respectively.   The complexity of $|(N(v_i) \cap L)|$ is $|N(v_s)| \times |L|$ where $v_s$ is the vertex of the largest degree.  Now $\text{max} \{k|P_1|, |V-L|.|(N(v_s)\cap L)|\} \cong |V|.|N(v_s)||L|\cong$ O$(n)$. Hence the computational complexity of Algorithm \ref{algo23} is O$(n)$. 

\begin{remark}
{\rm The case when  $|N(v_s)|$ is so large that it approaches to $|V|$ for some graphs, then  the computational complexity of  Algorithm \ref{algo21}, Algorithm \ref{algo22} and Algorithm \ref{algo23} is  O$(n^2)$. However in design problems, such graphs do not appear quite often.
Using binary search method,  this complexity can be reduced to $n\text{log}n$ but at present, we are not considering this method.} 
\end{remark}

\begin{figure}[H]
	\centering
	\includegraphics[width=0.95\linewidth]{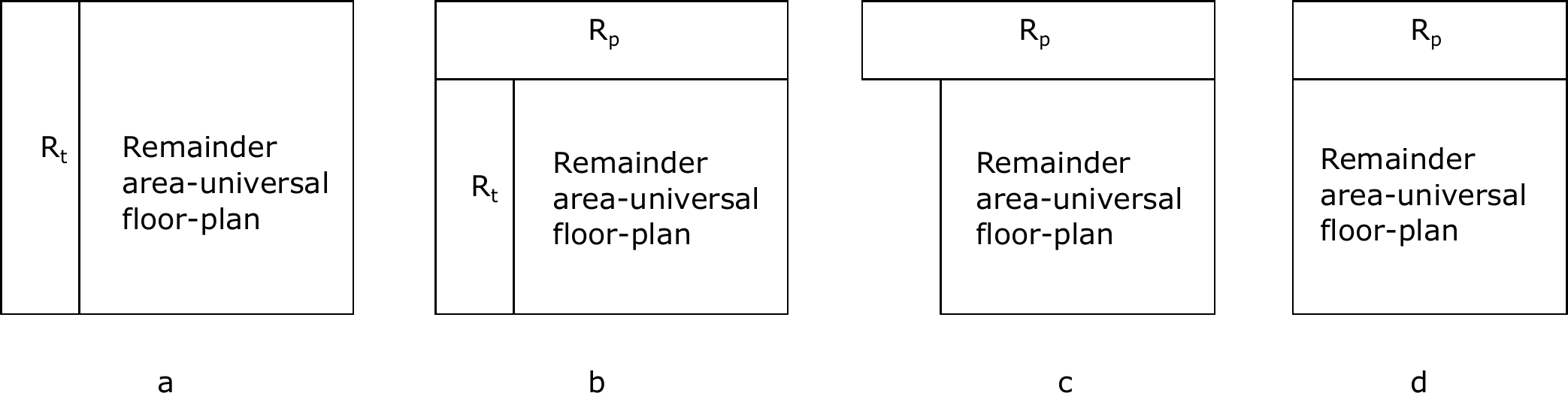}
	\caption{(a-b) Possible positions for an exterior rectangle $R_t$ in the rectangular dual $F$ of $\mathcal{G}$, (c) dual $F'$ after deleting $R_t$ from $F$ and (d) adjusting $R_p$ so that the resultant dual  is a rectangular dual for $H_1$.}
	\label{fig:f4}
\end{figure}

\begin{theorem} \label{thm22}
{\rm For  every  RDG $\mathcal{G}$ of the class $\mathcal{C}$, Algorithm \ref{algo23} constructs an area-universal rectangular dual.} 
\end{theorem}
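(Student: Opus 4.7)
The plan is to proceed by induction on the order in which vertices are added to $L$ by Algorithm \ref{algo23}, maintaining two invariants after each insertion: that the partial drawing $F_i$ is a rectangular dual of the subgraph of $\mathcal{G}$ induced on the current $L_i$, and that $F_i$ satisfies the characterization of \cite{Eppstein12} quoted at the start of the paper---every maximal internal line segment is a side of at least one component rectangle.

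For the base case, the loop at lines 3--10 of Algorithm \ref{algo23} places the $k$ unit squares corresponding to $P_1$ as a single horizontal row. This is at once a rectangular dual of the path, and its only internal maximal segments are the $k-1$ unit vertical segments between consecutive squares, each of which is a side of both squares it separates, so the area-universality condition holds trivially.

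For the inductive step, fix the insertion of a vertex $v_t$ with $l = |N(v_t) \cap L_{i-1}|$, and let $R_{j_1}, \dots, R_{j_l}$ be the already-placed neighboring rectangles. I would first establish the crucial geometric invariant: at the moment $v_t$ is processed, the rectangles $R_{j_1}, \dots, R_{j_l}$ form a contiguous collinear run along a single straight segment of the current outer boundary of $F_{i-1}$. Granting this invariant, the four branches of Algorithm \ref{algo24} exhaustively cover the possible arrangements and in each case place $R_t$ so that one complete side of $R_t$ coincides with the union of the exposed sides of $R_{j_1}, \dots, R_{j_l}$. This matches precisely the adjacencies required by the induced subgraph on $L_i$ and preserves a rectangular outer boundary of $F_i$ for subsequent insertions. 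Area-universality of $F_i$ then follows from tracking how maximal internal segments evolve: the newly-internal segment shared between $R_t$ and the $R_{j_h}$'s is, by construction, exactly one side of $R_t$ and hence satisfies Theorem 1.1; the remaining sides of $R_t$ are either still on the outer boundary of $F_i$ (and hence not yet internal) or coincide with segments already witnessed by existing rectangles; and no previously internal maximal segment is extended by the insertion, because $R_t$ is placed strictly outside the interior of $F_{i-1}$. Iterating until $L_{|V|} = V$ then delivers an area-universal rectangular dual of $\mathcal{G}$.

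The main obstacle I anticipate is the rigorous proof of the geometric invariant that $R_{j_1}, \dots, R_{j_l}$ always lie on a single straight portion of the outer boundary when $v_t$ is processed. This is the bridge between the purely combinatorial hypothesis defining $\mathcal{C}$ and the geometric correctness of the four branches of Algorithm \ref{algo24}; a careful argument should combine planarity of $\mathcal{G}$, the degree-$4$ seed path $P_1$, and the recursive bound $|N(v_s) - (N(v_s) \cap L_{i-1})| \leq 3$ (which leaves at most three sides of any new rectangle free for future neighbors) to exclude pathological configurations in which different neighbors of $v_t$ would straddle two distinct sides of the current outer boundary and thereby defeat the case analysis of Algorithm \ref{algo24}.
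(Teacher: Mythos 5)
Your proposal follows essentially the same route as the paper's own proof: rectangles are inserted one at a time, each insertion creates exactly one new internal line segment which is by construction a full side of the newly placed rectangle, and one checks that no previously created maximal internal segment gets extended to $180^\circ$, so the Eppstein et al.\ characterization applies. The geometric invariant you flag as the main obstacle --- that the already-placed neighbours of $v_t$ form a contiguous collinear run on the current outer boundary, so that the four branches of Algorithm \ref{algo24} are exhaustive --- is precisely the point the paper also leaves informal (it is merely asserted through the two-case analysis of how the new segment $l_i$ meets the preceding ones), so your sketch, including its acknowledged gap, mirrors the published argument.
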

\begin{proof}
We know that a rectangular dual is area-universal if and only if	 every internal line-segment is maximal. It is noted that Algorithm \ref{algo23} updates an empty dual $F$ by inserting rectangles one by one for each vertex of $\mathcal{G}$ such that every resultant dual  is an rectangular dual and there generates a new internal line segment (one of sides of a rectangle)  while inserting each rectangle (except the first one) in $F$. We prove that every generated internal line segment is maximal and is the side of atleast one of the rectangles of the rectangular dual for $\mathcal{G}$, i.e., we need to prove that each internal line segment is not extendable at its endpoints.
\par
 As $F$ is updated by inserting a rectangle $R_i$ (except for the first one)  by Algorithm  \ref{algo23}, a new  internal line segment $l_i$  is generated in $F$. We claim that $l_i$ does not make an angle of $180^\circ$ with any of the preceding internal line segments (segment). There  arise  the following two possibilities:
 \begin{enumerate}
 \item[i.] $l_i$ is perpendicular to some preceding internal line segments (segment)  meeting at one of the endpoints of these preceding internal line segments (segment) (for instance, the new generated line segment $l_6$ (say) due to the insertion of $R_6$ in Fig. \ref{fig:f3}e is perpendicular to the preceding internal line segments  lying at the  common borders of $R_1$ and $R_2$, $R_2$ and $R_3$, and $R_3$ and $R_5$),
 \item[ii.] none of the endpoints of any of the preceding internal line segments (segment) is intersected by $l_i$ (for instance, the new generated line segment $l_3$ (say) due to the insertion of $R_3$ in Fig. \ref{fig:f3}b does not meet to the preceding internal line segment $l_2$).
 \end{enumerate} 
From above two possibilities, it is clear that  $l_i$ does not allow to extend the preceding internal line segments (segment).  Since $l_i$ is arbitrary. Hence every internal line segment of $F$ is not extendable and is one of the side of rectangles of $F$. Hence the proof.
\end{proof}

\begin{theorem} \label{thm23}
{\rm  Every induced rectangularly dualizable subgraph of each of the graphs of $\mathcal{C}$ admits an area-universal rectangular dual.} 
\end{theorem}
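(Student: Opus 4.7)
The plan is to construct an area-universal rectangular dual of the induced subgraph by modifying the one produced by Algorithm \ref{algo23} on the ambient graph. Let $\mathcal{G}\in\mathcal{C}$ with area-universal rectangular dual $F$ from Theorem \ref{thm22}, let $H$ be an induced rectangularly dualizable subgraph, and put $S=V(\mathcal{G})\setminus V(H)$. The idea is to delete the rectangles corresponding to vertices of $S$ from $F$ one at a time, and after each deletion to extend a neighbouring rectangle $R_p$ so as to absorb the vacated region, following the template illustrated in Fig.~\ref{fig:f4}(c)--(d). I would process the vertices of $S$ in reverse of the order in which Algorithm \ref{algo23} inserted their rectangles, so that the vertex removed at each step was the most recent among the still-present members of $S$, giving access to one of the four simple boundary configurations treated by Algorithm \ref{algo24}.

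I would then argue by induction on $|S|$. The first obligation is that after each deletion-and-extension step, the resulting partition of the enclosing rectangle is a rectangular dual of the corresponding induced subgraph: the adjacencies of the extended $R_p$ must match those demanded by that subgraph. Because $H$ is itself assumed to be an RDG, these adjacencies are consistent, and the four-sidedness and planarity of the partition follow from the structure of the extension in each of the four cases of Algorithm \ref{algo24}. The second and more delicate obligation is area-universality. By Theorem 1 of \cite{Eppstein12}, this reduces to showing that every maximal internal line segment of the updated dual is a side of one of its component rectangles. Segments inherited unchanged from the previous dual remain sides of the same rectangles (none of them being $R_t$, which has been deleted), while the new segments along the extended boundary of $R_p$ are, by construction, sides of $R_p$.

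The main obstacle is ruling out the possibility that deleting $R_t$ allows two segments that had been separated by $R_t$ to become collinear and merge into a longer maximal segment which then fails to be fully the side of any remaining rectangle. I would resolve this case by case along the four insertion patterns of Algorithm \ref{algo24}, exploiting in each case the fact that the side of $R_t$ along which such a merge could occur is precisely captured by a side of the extended $R_p$, which caps the putative merged segment and keeps it maximal as a side of $R_p$. An analogous check, mirroring the perpendicularity/non-intersection dichotomy used in the proof of Theorem \ref{thm22}, shows that the endpoints of every internal line segment of the updated dual remain T-joints. Iterating the argument $|S|$ times yields an area-universal rectangular dual of $H$.
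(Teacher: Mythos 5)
Your overall strategy---peeling the rectangles of $S=V(\mathcal{G})\setminus V(H)$ off the area-universal dual $F$ one at a time, repairing by stretching a neighbouring rectangle $R_p$, and then re-checking that every maximal internal segment is a side of some rectangle---is the same as the paper's, and you correctly anticipate the segment-merging issue. The gap is in your choice of deletion order and in the claim that reverse insertion order gives ``access to one of the four simple boundary configurations treated by Algorithm~\ref{algo24}.'' Those four configurations describe where a rectangle sits relative to its neighbours \emph{at the moment it is inserted}; they say nothing about its position in the finished dual $F$, where later insertions may have surrounded it. A vertex of $S$ that Algorithm~\ref{algo23} inserted late can perfectly well end up as an \emph{interior} rectangle of $F$ (and of every intermediate dual), and deleting an interior rectangle leaves a hole that cannot in general be absorbed by extending a single neighbour $R_p$ while keeping all regions rectangular, since each side of an interior rectangle may be subdivided among several neighbours. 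So the induction step is not justified as stated.

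The missing idea is the one the paper's proof supplies: at every stage the not-yet-deleted part of $S$ contains an \emph{exterior} vertex of the current graph. This is precisely where the hypothesis that $H$ is rectangularly dualizable gets used: if $S$ consisted only of interior vertices, then $H$ would acquire an interior face of length at least four, contradicting the triangularity of the interior faces of an RDG. Repeatedly extracting such an exterior vertex keeps every deleted rectangle on the outer boundary of the current dual, which is exactly what reduces the repair to the two boundary configurations of Fig.~\ref{fig:f4}(a)--(b) and makes both the $R_p$-adjustment and the maximality check of the surviving segments go through. Your proposal never establishes that the rectangle being removed lies on the boundary of the current dual, so the case analysis you sketch does not cover the situations that can actually arise; replacing the reverse-insertion schedule by the exterior-vertex extraction argument would close the gap.
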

\begin{proof}
Suppose that $\mathcal{H}_k$ is an induced rectangular dualizable subgraph of any of the graph $\mathcal{G} =(V,E)$ of $\mathcal{C}$.   Since  $\mathcal{H}_k$ is a rectangular dualizable graph, it admits a rectangular dual  where no four component rectangles meet at a point.   This implies that the interior regions of $\mathcal{H}_k$ are triangular. Since  $\mathcal{H}_k$ is an  induced subgraph of  $\mathcal{G}$, $\mathcal{H}_k$ is the graph whose vertex set  $V'\subsetneq V$ and the edge set $E'$  contains the edges of  $\mathcal{G}$ whose endpoints are in $V'$. Suppose that $V'=V-S$. Clearly, $S$ is nonempty.  First we claim that $S$ contains an exterior vertex of $\mathcal{G}$. To the contrary, suppose that $S$ has no exterior vertex of $\mathcal{G}$. This implies that  $S$ only contains  the interior vertices (vertex) of $\mathcal{G}$ and hence  $\mathcal{H}_k$ has atleast one region of  length atleast 4 which is a contradiction to the fact that  every interior region of $\mathcal{H}_k$ is triangular.   Therefore, $S$ contains atleast one exterior vertex of $\mathcal{G}$. Let $v_t \in S$ be an exterior vertex of $\mathcal{G}$. Consider an  induced graph $H_1$ of $\mathcal{G}$ obtained by deleting $v_t$ together with all the edges incident to $v_t$.   There arise two possibilities for  deletion of  the rectangle $R_t$ (corresponding to $v_t$) in the rectangular dual $F$ for $\mathcal{G}$ as shown in Fig. \ref{fig:f4}a and \ref{fig:f4}b. Note that the remaining possibilities can be covered by flipping or rotating $F$. The first possibility shows that on deleting $R_t$ from $F$, the remaining part is still an area universal rectangular dual for  $\mathcal{H}_1$ while the second possibility shows that on deleting $R_t$ from $F$, the remaining part is not a rectangular dual (see Fig. \ref{fig:f4}c) for  $\mathcal{H}_1$, but can be transformed to an area universal rectangular dual by adjusting the dimension of the rectangle $R_p$ as shown in \ref{fig:f4}d.
\par
 Proceeding as before, we can find an exterior vertex $v_k \in S-v_t$ since $\mathcal{H}_1$ being an induced subgraph of $\mathcal{G}$  admits an area-universal rectangular dual and the induced subgraph  $\mathcal{H}_2$ of $\mathcal{H}_1$ obtained by deleting  $v_k$ from $\mathcal{H}_1$ admits an area-universal rectangular dual.  Continuing  in this way until $S= \phi$, we find that $\mathcal{H}_k$ admits an area-universal rectangular dual. Hence the proof.
\end{proof}

\section{Concluding Remarks and Future Task} \label{sec3}

In this paper, we have identified a class $\mathcal{C}$ of RDGs   in which each RDG can be realized by  an area-universal rectangular dual. Eppstein et al.\cite{Eppstein12} described an algorithm to construct an area-universal rectangular dual for an RDG if it exists. In general, this algorithm is not fully polynomial. We have described  a polynomial time algorithm to construct  an area-universal rectangular dual  for an RDG that belongs to $\mathcal{C}$. Further each graph of $\mathcal{C}$ has been characterized by the fact that every induced rectangularly dualizable subgraph of each of its graph admits an area-universal rectangular dual. 
\par
Recently the most challenging problem  is  to find an efficient algorithm to construct  an area-universal rectangular dual for a given RDG, if it exists. 
\par
It can be observed that the importance of class $\mathcal{C}$ lies in the fact that  if a given RDG $\mathcal{G}$ can be split into a finite union $U$ of RDGs of $\mathcal{C}$, then $\mathcal{G}$ also admits an area-universal rectangular dual. In fact, the corresponding area-universal rectangular duals to each of the RDGs belonging to  $U$ can be glued to obtain an  area-universal rectangular dual for $\mathcal{G}$. To the contrary, it is interesting to find out in future that if a given RDG can not be split into a finite union $U$ of RDGs of $\mathcal{C}$, then whether  this RDG can be realized by  an area-universal rectangular dual.

 \bibliographystyle{elsarticle-num} 
\bibliography{references}

\end{document}